\title[Quasi-Isomorphisms and Divided Power Structures]{Quasi-Isomorphisms of
Commutative DG Rings and Divided Power
Structures}
\author{Amnon Yekutieli}
\address{Department of Mathematics,
Ben Gurion University, Be'er Sheva 84105, Israel.
\newline \indent \textup{\textit{Email}:
\href{mailto:amyekut@gmail.com}{\rm \scriptsize
\nolinkurl{amyekut@gmail.com}},
\textit{Web}: \rm \scriptsize
\url{https://sites.google.com/view/amyekut-math/home}}}
\date{21 Oct 2023, {\em Version} 2}
\newtheorem{thm}[equation]{Theorem}
\newtheorem{lem}[equation]{Lemma}
\theoremstyle{definition}
\newtheorem{rem}[equation]{Remark}
\newcommand{\xar}{\xrightarrow}
\newcommand{\sub}{\subseteq}
\newcommand{\opn}{\operatorname}
\newcommand{\cat}[1]{\operatorname{\mathsf{#1}}}
\newcommand{\cd}{\mspace{1.8mu}{\cdotB}\mspace{2.0mu}}
\newcommand{\pl}{\msp{1.0} + \msp{1.0}}
\newcommand{\rmitem}[1]{\item[\text{\textup{(#1)}}]}
\newcommand{\mrm}[1]{\mathrm{#1}}
\newcommand{\ga}{\gamma}
\newcommand{\K}{\mathbb{K}}
\newcommand{\Q}{\mathbb{Q}}
\newcommand{\Z}{\mathbb{Z}}
\newcommand{\tup}[1]{\textup{#1}}
\newcommand{\boplus}{\bigoplus\nolimits}
\newcommand{\ot}{\otimes}
\newcommand{\til}[1]{\tilde{#1}}
\renewcommand{\d}{\mathrm{d}}
\newcommand{\lb}{\linebreak}
\newcommand{\ov}{\mspace{0mu} / \mspace{-0.5mu}}
\newcommand{\lmsp}{\mspace{1.5mu}}
\newcommand{\mmsp}{\mspace{3mu}}
\newcommand{\msp}[1]{\mspace{#1 mu}}
\begin{document}

\begin{abstract}
We prove that a quasi-isomorphism $f : A \to B$ between commutative DG rings,
where $B$ admits a divided power structure, can be factored as
$f = \til{f} \circ e$, where
$e : A \to \til{B}$ is a split injective quasi-isomorphism, and
$\til{f} : \til{B} \to B$ is a surjective quasi-isomorphism.

This result is used in our work on a DG approach to the cotangent complex, and
our work on the derived category of commutative DG rings.
\end{abstract}

\maketitle

The purpose of this short paper is to prove the following theorem:

\begin{thm} \label{thm:100}
Let $f : A \to B$ be a quasi-isomorphism of CDG rings. Assume that
$B$ admits a divided power structure. Then there exists a CDG ring $\til{B}$,
with CDG ring homomorphisms $e : A \to \til{B}$, $p : \til{B} \to A$ and
$\til{f} : \til{B} \to B$, such that
$p \circ e = \opn{id}_A$, $\til{f} \circ e = f$, and $\til{f}$ is a surjective
quasi-isomorphism.
\end{thm}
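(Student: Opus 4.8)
The plan is to realize $\til{B}$ as a mapping path space for $f$, which reduces the statement to the construction of a suitable path object for $B$ in the category of commutative DG rings. Suppose we have produced a CDG ring $P$ together with CDG ring homomorphisms $\iota : B \to P$ and $\pi_0, \pi_1 : P \to B$ for which $\pi_0 \circ \iota = \pi_1 \circ \iota = \opn{id}_B$, the homomorphism $\iota$ is a quasi-isomorphism, and $(\pi_0, \pi_1) : P \to B \times B$ is surjective. Put $\til{B} := A \times_B P$, the fiber product formed along $f : A \to B$ and $\pi_0 : P \to B$, and define $e(a) := \bigl( a, \iota(f(a)) \bigr)$, $p(a, x) := a$ and $\til{f}(a, x) := \pi_1(x)$. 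Then $e$ lands in the fiber product, since $\pi_0(\iota(f(a))) = f(a)$, and $e$, $p$, $\til{f}$ are CDG ring homomorphisms. The relations $\pi_0 \circ \iota = \pi_1 \circ \iota = \opn{id}_B$ give $p \circ e = \opn{id}_A$ and $\til{f} \circ e = f$ immediately, so $e$ is split injective. For surjectivity of $\til{f}$: given $b \in B$, surjectivity of $(\pi_0, \pi_1)$ provides $x \in P$ with $\pi_0(x) = 0$ and $\pi_1(x) = b$; then $(0, x) \in \til{B}$ and $\til{f}(0, x) = b$.

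It remains to see that $\til{f}$ is a quasi-isomorphism, and here I would argue by two-out-of-three. The homomorphism $\pi_0$ is surjective (because $(\pi_0, \pi_1)$ is) and is a quasi-isomorphism (because $\pi_0 \circ \iota = \opn{id}_B$ while $\iota$ is a quasi-isomorphism); hence the long exact cohomology sequence of $0 \to \ker(\pi_0) \to P \to B \to 0$ shows that $\ker(\pi_0)$ is acyclic. Pulling this sequence back along $f$ gives $0 \to \ker(\pi_0) \to \til{B} \to A \to 0$, so $p : \til{B} \to A$ is surjective with acyclic kernel, i.e., a quasi-isomorphism. Then $p \circ e = \opn{id}_A$ forces $e$ to be a quasi-isomorphism, and, finally, $\til{f} \circ e = f$ with both $e$ and $f$ quasi-isomorphisms forces $\til{f}$ to be a quasi-isomorphism. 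This reduces Theorem~\ref{thm:100} to the existence of the path object $P$.

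The construction of $P$ is where the divided power structure of $B$ is used, and I expect it to be the main obstacle. I would build $P$ as a relative semi-free divided power CDG ring over $B$: adjoin to $B$, successively and degree by degree, polynomial variables in odd degrees and divided power variables in even degrees, together with a differential, arranged so that on the one hand the inclusion $\iota : B \hookrightarrow P$ becomes a quasi-isomorphism — by the classical device of throwing in variables to kill the relevant relative cohomology — and on the other hand $P$ carries a surjection onto $B \times B$ restricting to the diagonal on $B$, which furnishes $\pi_0$ and $\pi_1$. The crucial point is that the even-degree generators genuinely must be divided power variables: over $\Z$ their naive polynomial analogues have torsion in cohomology and do not become acyclic. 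But then such a divided power variable, under the structural map $P \to B \times B$, is sent to an element all of whose divided powers must already exist in $B \times B$ — and this is precisely why one needs $B$, hence $B \times B$, to admit a divided power structure. Once $P$ is built in this way, one regards it as a plain CDG ring and applies the reduction above. The technical heart is thus the bookkeeping in this construction: adjoining the divided power variables so as to preserve simultaneously, at every stage, the surjectivity of $(\pi_0, \pi_1)$, the coherence of the divided power structure, and the well-definedness of the homomorphisms to $B$, and then verifying that for the resulting (colimit) ring $P$ the map $\iota$ is indeed a quasi-isomorphism.
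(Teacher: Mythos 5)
Your reduction of the theorem to a path object, and the two-out-of-three bookkeeping for the fiber product $\til{B} = A \times_B P$, are correct as far as they go; but the object $P$ you postulate cannot exist in this category, so there is a genuine gap. CDG rings here are nonpositively graded, so $P^1 = 0$ and every element of $P^0$ is a cocycle. If $\iota : B \to P$ is a quasi-isomorphism with $\pi_0 \circ \iota = \pi_1 \circ \iota = \opn{id}_B$, then any $x \in P^0$ can be written as $x = \iota(c) + \d(u)$ with $c \in B^0$ and $u \in P^{-1}$, whence $\pi_0(x) - \pi_1(x) = \d\bigl(\pi_0(u) - \pi_1(u)\bigr) \in \opn{B}^0(B)$. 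Hence the degree $0$ part of the image of $(\pi_0, \pi_1)$ lies in the set of pairs $(b_0, b_1)$ with $b_0 - b_1$ a coboundary, so $(\pi_0, \pi_1)$ can be surjective in degree $0$ only if $\opn{H}^0(B) = 0$. Already for $B$ a nonzero ordinary commutative ring (which satisfies the PD hypothesis vacuously) no such $P$ exists, and your surjectivity argument for $\til{f}$ --- choosing $x$ with $\pi_0(x) = 0$ and $\pi_1(x) = b$ --- breaks for every $b \in B^0$ that is not a coboundary. This is the familiar phenomenon that in connective settings ``fibrations'' are only surjective in degrees strictly below the top; any repair must demand surjectivity of $(\pi_0, \pi_1)$ only in degrees $\leq -1$ and recover degree $0$ surjectivity of $\til{f}$ by a separate homological argument, which is exactly what the paper does in Step 6 of its proof: $\opn{W}^0(B) = B^0 / \opn{Z}^0(B) = 0$, surjectivity of $\opn{B}^0(\til{f})$ coming from degree $-1$, and bijectivity of $\opn{H}^0(\til{f})$ together force $\til{f}^0$ to be surjective.

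A second, smaller point: even with a corrected requirement on $P$, its construction is where all the content of the theorem lies, and you leave it as a sketch. The inductive adjunction of odd polynomial and even divided power variables, the use of the PD structure of $B$ to define the homomorphisms on divided powers, and the verification that $\iota$ is a quasi-isomorphism are precisely the technical heart, and they are not carried out. For comparison, the paper avoids path objects and fiber products altogether: it builds a single CDG $\Z$-ring $C$ as a tensor product of the elementary contractible pieces of Lemmas \ref{lem:100} and \ref{lem:101} (polynomial in odd degrees, divided power polynomial in even degrees, mapped into $B$ by $x_i \mapsto \ga^i(b_s)$, which is where the PD structure enters), chosen so that the image of $C$ generates $B^n / \opn{Z}^n(B)$ for all $n \leq -1$; it then sets $\til{B} := A \ot_{\Z} C$ with $e = \opn{id}_A \ot\, e_C$, $p = \opn{id}_A \ot\, p_C$, $\til{f} = f \ot g_C$, obtains the quasi-isomorphism statements from the contractibility of $C$ over $\Z$, and gets surjectivity of $\til{f}$ in all degrees, including degree $0$, from the $\opn{W} / \opn{B} / \opn{Z}$ diagram chase in which the quasi-isomorphism property of $\til{f}$ itself supplies the surjectivity your fibration hypothesis was meant to provide.
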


It is easy to see that $p$ and $e$ are quasi-isomorphisms too.
So $e : A \to \til{B}$ is
a split injective quasi-isomorphism, and $p : \til{B} \to A$ is surjective
quasi-isomorphism.
Here is the commutative diagram, in the category $\cat{CDGRng}$
of commutative DG rings, illustrating the theorem.
\begin{equation} \label{eqn:100}
\begin{tikzcd} [column sep = 14ex, row sep = 7ex]
A
&
\til{B}
\ar[d, two heads, "{\til{f}, \mmsp \mrm{sqi}}"]
\ar[l, two heads, "{p, \mmsp \mrm{sqi}}"']
\\
A
\ar[u, two heads, tail, "{\opn{id}_A, \mmsp \mrm{isom}}"]
\ar[ur, tail, "{ \quad e, \mmsp \mrm{qi}}" inner sep = 0.2ex]
\ar[r, "{f, \mmsp \mrm{qi}}"']
&
B
\end{tikzcd}
\end{equation}

This is the organization of the paper: we start by explaining the
terminology and recalling some relevant material; then we state and prove two
lemmas; and finally we prove Theorem \ref{thm:100}.

A {\em commutative graded ring} is a nonpositive graded ring
$A = \bigoplus_{i \leq 0} A^i$,
satisfying the {\em strong commutativity conditions}
$b \cd a = (-1)^{i \cd j} \cd a \cd b$
for all $a \in A^i$ and $b \in A^j$, and $a \cd a = 0$ when $i$ is odd.
The category of commutative graded rings is $\cat{CGRng}$.

A {\em commutative differential graded ring} is a commutative graded ring $A$,
equipped with a differential $\d_A$ of degree $1$, satisfying
$\d_A \circ \lmsp \d_A = 0$ and
$\d_A(a \cd b) = \d_A(a) \cd b \pl (-1)^i \cd a \cd \d_A(b)$
for all  $a \in A^i$ and $b \in A^j$.
We shall use the abbreviation {\em CDG ring} for a
commutative differential graded ring.
The category of CDG rings is denoted by $\cat{CDGRng}$.

Given a CDG ring $A$, by a {\em CDG $A$-ring} we mean a CDG ring $B$ equipped
with a homomorphism $A \to B$. We denote by $\cat{CDGRng} \ov A$ the category of
CDG $A$-rings.

The traditional name for a CDG ring $B$ is a {\em commutative associative unital
DG $\K$-algebra}, where $\K$ is some commutative base ring, e.g.\
$\K = \Z$, and the grading is often homological, i.e.\
$B = \bigoplus_{i \geq 0} B_i$.
The tacit assumption when using the expression ``$\K$-algebra'' is that the
operations in $B$ are all $\K$-linear. However, for a CDG ring $A$ and a CDG
$A$-ring $B$, the differential $\d_B$ is not $A$-linear, but only
$A^0$-linear. This is one of several reasons for preferring the name
``commutative DG ring'' over ``commutative DG algebra''.

There is a functor
$\cat{CDGRng} \to \cat{CGRng}$, $A \mapsto A^{\natural}$, which forgets the
differentials. There is also the cohomology functor
$\cat{CDGRng} \to \cat{CGRng}$,
$A \mapsto \opn{H}(A) = \boplus_{i \leq 0} \opn{H}^i(A)$.

Let $M = \bigoplus_{i \in \Z} M^i$ and $N = \bigoplus_{i \in \Z} N^i$ be DG
$A$-modules. For every $i$ let $\opn{Hom}_A(M, N)^i$
be the abelian group of degree $i$ homomorphisms
$\phi : M \to N$, such that
$\phi(a \cd m) = (-1)^{i \cd k} \cd a \cd \phi(m)$ for all $a \in A^k$
and $m \in M$, and
There is a DG $A$-module
\begin{equation} \label{eqn:135}
\opn{Hom}_A(M, N) := \boplus_{i \in \Z} \opn{Hom}_A(M, N)^i ,
\end{equation}
whose differential is
$\d(\phi) := \d_N \circ \phi - (-1)^{i} \cd \phi \circ \d_M$.
The DG $A$-modules form an $A^0$-linear DG category
$\cat{C}(A)$, with
$\opn{Hom}_{\cat{C}(A)}(M, N) := \opn{Hom}_A(M, N)$.

A {\em strict homomorphism of DG $A$-modules}
$\phi : M \to N$ is a degree $0$ homomorphism, such that
$\phi(a \cd m) = a \cd \phi(m)$ for all $a \in A$ and $m \in M$, and
$\d_N \circ \phi = \phi \circ \d_M$;
in other words, $\phi$ is a degree $0$ cocycles in $\opn{Hom}_A(M, N)$.
The category of DG $A$-modules, with strict homomorphisms, is denoted by
$\cat{C}_{\mrm{str}}(A)$. It is an $A^0$-linear abelian category.

A bounded above DG $A$-module $P = \bigoplus_{i \leq i_0} P^i$
is called {\em semi-free} if $P^{\natural}$ is a graded free
$A^{\natural}$-module.

Let $X = \coprod_{i \leq 0} X^i$ be a {\em nonpositive graded set}.
The elements of $X^i$ are called variables of degree $i$.
The {\em commutative graded polynomial ring} on $X$ is the graded ring
$\Z[X]$ generated by the set of variables $X$, modulo the strong commutativity
relations. A CDG $A$-ring $B$ is called a {\em semi-free CDG $A$-ring} if
$B^{\natural} \cong A^{\natural} \ot_{\Z} \Z[X]$
as graded $A$-rings for some nonpositive graded set $X$.

For more details on DG algebra see Chapter 3 of the book \cite{Ye3}, and the
papers \cite{Ye1} and \cite{Ye2}.

A {\em divided power structure}, abbreviated to {\em PD structure}, on a
CDG ring $A$, consists of functions
$\ga^k : A^i \to A^{k \cd i}$ for all integers $k \geq 0$ and even $i \leq -2$.
The two conditions that are important for us are these:
\begin{itemize}
\rmitem{PD1} For all even $i \leq -2$, all $a \in A^i$, and all
$k, l \geq 0$, these equalities hold:
$\ga^0(a) = 1$, $\ga^1(a) = a$, and
$\ga^k(a) \cd \ga^l(a) = \binom{k + l}{l} \cd \ga^{k + l}(a)$.

\rmitem{PD2} For all even $i \leq -2$, all $a \in A^i$, and all
$k \geq 1$, this equality holds:
 $\d_A(\ga^k(a)) = \d_A(a) \cd \ga^{k - 1}(a)$.
\end{itemize}
There are more conditions that we won't need, see \cite{AH}, \cite{Ri} or
\cite[Chapter tag =
\href{https://stacks.math.columbia.edu/tag/08P5}{\texttt 08P5}]{SP}.
Induction on $k$ shows that $a^k = (k!) \cd \ga^k(a)$.

In characteristic $0$, i.e.\ when $\Q \sub A^0$, there is a unique PD structure
on $A$, which is $\ga^k(a) := (k!)^{-1} \cd a^k$. But otherwise PD structures
need not exist; an easy counterexample is the CDG ring $A := \Z[x]$, where $x$
is a variable of degree $-2$ and $\d_A(x) = 0$.

\begin{lem} \label{lem:100}
Let $n \leq -1$ be odd. Take variables $x$ and $y$, with $\opn{deg}(x) = n$ and
$\opn{deg}(y) = n + 1$.
\begin{enumerate}
\item There is a unique differential $\d_C$ on the commutative graded polynomial
ring $\Z[x, y]$, satisfying $\d_C(x) = y$ and $\d_C(y) = 0$.
The resulting semi-free CDG $\Z$-ring, with semi-basis
$\{ x, y \}$, is denoted by $C$.

\item There is a unique CDG ring homomorphism $p : C \to \Z$ such that
$p \circ e = \opn{id}_{\Z}$, $p(x) = 0$, and $p(y) = 0$.

\item The inclusion $e : \Z \to C$ is a homotopy equivalence in
$\cat{C}_{\mrm{str}}(\Z)$.
\end{enumerate}
\end{lem}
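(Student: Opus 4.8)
The plan is to make everything explicit: describe the underlying graded ring of $C$, write down $p$ as the evident augmentation, and exhibit a contracting homotopy for $C$ by hand. For item (1), note first that since $\opn{deg}(x) = n$ is odd, strong commutativity forces $x \cd x = 0$, so $C^{\natural} = \Z[x, y] \cong \bigl( \Z[x] / (x^2) \bigr) \ot_{\Z} \Z[y]$ is the free commutative graded $\Z$-ring on the graded set $\{ x, y \}$. By its universal property there is a unique $\Z$-linear derivation $\d_C$ of degree $+1$ with $\d_C(x) = y$ and $\d_C(y) = 0$. One checks that $\d_C \circ \d_C$ is again a derivation --- of even degree $2$, so no Koszul signs appear --- and it vanishes on the generators $x$ and $y$, hence $\d_C \circ \d_C = 0$. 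Thus $(C, \d_C)$ is a semi-free CDG $\Z$-ring with semi-basis $\{ x, y \}$, and uniqueness of $\d_C$ is immediate since a derivation is determined by its values on generators.

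For item (2), the augmentation $p : C \to \Z$ with $p(x) = p(y) = 0$ is the unique $\Z$-ring homomorphism killing $x$ and $y$, and $p \circ e = \opn{id}_{\Z}$ holds automatically by $\Z$-linearity. To see that $p$ is a homomorphism of CDG rings it suffices to check $p \circ \d_C = \d_{\Z} \circ p = 0$. Writing $C = \Z \cd 1 \oplus C_{+}$, where $C_{+}$ is the ideal generated by $x$ and $y$, we have $\d_C(\Z \cd 1) = 0$ and $\d_C(C_{+}) \sub C_{+} = \ker(p)$; indeed $\d_C(y^k) = 0$ and $\d_C(y^k x) = y^{k + 1}$, the sign here being $+1$ because $\opn{deg}(y^k) = k \cd (n + 1)$ is even. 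Hence $p \circ \d_C = 0$, as needed.

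For item (3), item (2) already supplies a strict $\Z$-linear chain map $p : C \to \Z$ with $p \circ e = \opn{id}_{\Z}$, so it remains to produce a homotopy $\opn{id}_C - e \circ p \simeq 0$. The underlying graded abelian group $C^{\natural}$ has $\Z$-basis $\{ y^k : k \geq 0 \} \cup \{ y^k x : k \geq 0 \}$, and one defines a $\Z$-linear map $h : C \to C$ of degree $-1$ by $h(1) := 0$, $h(y^k) := y^{k - 1} x$ for $k \geq 1$, and $h(y^k x) := 0$ for $k \geq 0$; a degree count shows $h$ is well defined. Verifying the identity $\d_C \circ h + h \circ \d_C = \opn{id}_C - e \circ p$ on these basis elements involves only the three cases $1$, $y^k$ with $k \geq 1$, and $y^k x$ with $k \geq 0$, each an immediate consequence of $\d_C(y^{k - 1} x) = y^k$ together with $p(y^k) = p(y^k x) = 0$ in the relevant range. (Concretely, this exhibits $C$ as the direct sum of $\Z \cd 1$ with the contractible two-term complexes $\Z \cd y^{k - 1} x \iso \Z \cd y^k$, $k \geq 1$.) Therefore $e$ and $p$ are mutually homotopy-inverse, so $e$ is a homotopy equivalence in $\cat{C}_{\mrm{str}}(\Z)$.

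The argument is routine; the one place to be careful is the Koszul sign bookkeeping --- in verifying that $\d_C$ is a differential in item (1) and in the formula $\d_C(y^k x) = y^{k + 1}$ --- but every sign that could arise equals $+1$ precisely because $n + 1$ is even. I do not anticipate any genuine obstacle.
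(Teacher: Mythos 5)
Your proposal is correct and follows essentially the same route as the paper: the paper establishes items (1) and (2) by citing the existence/uniqueness lemmas for differentials and homomorphisms on semi-free CDG rings (which you instead verify directly from the universal property of $\Z[x,y]$), and for item (3) it decomposes $C$ as $\Z \cd 1_C$ plus the contractible two-term summands $\bigl( \Z \cd x \cd y^k \bigr) \oplus \bigl( \Z \cd y^{k+1} \bigr)$, which is exactly the decomposition your explicit contracting homotopy $h$ encodes (and which you note parenthetically). Your sign bookkeeping, resting on $\opn{deg}(y) = n+1$ being even, checks out.
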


\begin{proof}
(1) According to \cite[Lemma 3.20]{Ye2} the differential $\d_C$ exists and it
unique. Clearly $C$ is a semi-free CDG $\Z$-ring.

\medskip \noindent
(2) By \cite[Lemma 3.19]{Ye2},

\medskip \noindent
(3) As graded $\Z$-modules we have
\[ C^{\natural} = \bigoplus_{0 \leq i \leq 1, \mmsp 0 \leq j}
\Z \cd x^i \cd y^j , \]
and each $\Z \cd x^i \cd y^j$ is a rank $1$ free $\Z$-module.
Define $M_0 := \Z \cd 1_C$, which is a sub DG $\Z$-module of $C$, and
$e : \Z \to M_0$ is an isomorphism in $\cat{C}_{\mrm{str}}(\Z)$.
For $i \geq 1$ define
\[ M_i := \bigl( \Z \cd x \cd y^i \bigr) \oplus \bigl( \Z \cd y^{i + 1} \bigr)
. \]
Because $\d(x \cd y^i) = y^{i + 1}$ and
$\d(y^{i + 1}) = 0$, we see that $M_i$ is a sub DG $\Z$-module of $C$. Therefore
$C =  \bigoplus_{i \geq 0} M_i$ as DG $\Z$-modules. For every $i \geq 1$ the
DG $\Z$-module $M_i$ is contractible, so the inclusion $M_0 \to C$ is a
homotopy equivalence in $\cat{C}_{\mrm{str}}(\Z)$.
\end{proof}

\begin{lem} \label{lem:101}
Let $n \leq -2$ be even. Take variables $y, x_1, x_2, x_3, \ldots$
with $\opn{deg}(y) = n + 1$ $\opn{deg}(x_i) = n \cd i$.
Define the graded set
$X := \{ x_i \}_{i \geq 1} \cup \{ y \}$.
\begin{enumerate}
\item There is a unique differential $\d_{\til{C}}$ on the commutative graded
polynomial ring $\Z[X]$,
satisfying $\d_{\til{C}}(y) = 0$, $\d_{\til{C}}(x_1) = y$, and
$\d_{\til{C}}(x_i) = x_{i - 1} \cd y$ for $i \geq 2$.
The resulting semi-free CDG $\Z$-ring, with semi-basis
$X$, is denoted by $\til{C}$.

\item There is a unique CDG ring homomorphism $p : \til{C} \to \Z$, such that
$p \circ e = \opn{id}_{\Z}$, $p(x_i) = 0$, and $p(y) = 0$.

\item Let $\til{R} \sub \til{C}$ be the ideal generated by the elements
$r_{i, j} := x_i \cd x_j - \binom{i + j}{j} \cd x_{i + j}$, for
$i, j \geq 1$. Then $\til{R}$ is a DG ideal of $\til{C}$, and $p(\til{R}) = 0$.

\item Define the CDG ring $C := \til{C} / \til{R}$, and let $\bar{y}$ and
$\bar{x}_i$ by the images of $y$ and $x_i$ in $C$.
Then $C$ is a semi-free DG $\Z$-module, with semi-basis
$\{ \bar{x}_i \cd \bar{y}^j \}_{1 \leq i, \mmsp 0 \leq j \leq 1}$.

\item The inclusion $e : \Z \to C$ is a homotopy equivalence in
$\cat{C}_{\mrm{str}}(\Z)$.

\item There is a unique CDG ring homomorphism $p : C \to \Z$, such that
$p \circ e = \opn{id}_{\Z}$, $p(\bar{y}) = 0$, and $p(\bar{x}_i) = 0$.
\end{enumerate}
\end{lem}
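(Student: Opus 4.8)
The plan is to establish the six parts essentially in the stated order, leaning on the standard existence-and-uniqueness theorems for differentials and homomorphisms of semi-free CDG rings, together with two elementary facts: $y \cd y = 0$, since $\opn{deg}(y) = n + 1$ is odd, and Pascal's identity for binomial coefficients. For part (1) I would invoke \cite[Lemma 3.20]{Ye2}: because $\til{C}^{\natural} = \Z[X]$ is a commutative graded polynomial ring, a differential is determined by its values on the variables, and any assignment $v \mapsto \d_{\til{C}}(v)$, $v \in X$, with $\d_{\til{C}}(\d_{\til{C}}(v)) = 0$ for all $v$ defines one. The only instance needing a check is $x_i$ with $i \geq 2$: by the Leibniz rule, and since $\d_{\til{C}}(y) = 0$ and $\opn{deg}(x_{i - 1})$ is even, one gets $\d_{\til{C}}(\d_{\til{C}}(x_i)) = \d_{\til{C}}(x_{i - 1}) \cd y$, which equals $x_{i - 2} \cd y \cd y$ when $i \geq 3$ and $y \cd y$ when $i = 2$ — zero in both cases. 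For part (2) I would invoke \cite[Lemma 3.19]{Ye2}: a CDG ring homomorphism out of the semi-free ring $\til{C}$ is the same as a choice of images of the variables compatible with the differentials, and sending every variable to $0$ is compatible since $\Z$ carries the zero differential; this is visibly the unique homomorphism with the stated properties, and $p \circ e = \opn{id}_{\Z}$.

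Part (3) is the computational heart. By the Leibniz rule it suffices to show $\d_{\til{C}}(r_{i, j}) \in \til{R}$ for all $i, j \geq 1$. Expanding, reordering $y$ past the even-degree $x_k$ (no sign), and using $y \cd y = 0$, the key identity is, for $i, j \geq 2$,
\[ \d_{\til{C}}(r_{i, j}) = \bigl( x_{i - 1} \cd x_j + x_i \cd x_{j - 1} - \binom{i + j}{j} \cd x_{i + j - 1} \bigr) \cd y = (r_{i - 1, j} + r_{i, j - 1}) \cd y , \]
the second equality being precisely $\binom{i + j - 1}{j} + \binom{i + j - 1}{j - 1} = \binom{i + j}{j}$; the boundary cases, checked the same way, give $\d_{\til{C}}(r_{1, 1}) = 0$ and $\d_{\til{C}}(r_{1, j}) = r_{1, j - 1} \cd y$ for $j \geq 2$ (and $r_{j, i} = r_{i, j}$ handles $j = 1 < i$). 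Hence $\d_{\til{C}}(\til{R}) \sub \til{R}$, so $\til{R}$ is a DG ideal. That $p(\til{R}) = 0$ is immediate, since $p$ is a ring homomorphism and $p(r_{i, j}) = p(x_i) \cd p(x_j) - \binom{i + j}{j} \cd p(x_{i + j}) = 0$.

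For part (4) I would first run the obvious rewriting in $C = \til{C} / \til{R}$: the relations $\bar{x}_i \cd \bar{x}_j = \binom{i + j}{j} \cd \bar{x}_{i + j}$ and $\bar{y} \cd \bar{y} = 0$ reduce every monomial in the variables to a $\Z$-multiple of $1_C$, of $\bar{y}$, or of $\bar{x}_i \cd \bar{y}^j$ with $i \geq 1$ and $0 \leq j \leq 1$; so these elements span $C^{\natural}$ over $\Z$. For their $\Z$-linear independence I would identify the subalgebra generated by the $\bar{x}_i$ with the free divided power $\Z$-algebra $\Ga_{\Z}$ on one degree-$n$ generator $t$, via $\bar{x}_i \leftrightarrow \ga^i(t)$: the defining relations of $\Ga_{\Z}$ are exactly the images of the $r_{i, j}$, so $\til{R}$ is precisely the kernel of the quotient map; the classical $\Z$-freeness of $\Ga_{\Z}$ on $\{ \ga^i(t) \}_{i \geq 0}$, combined with $\opn{deg}(y)$ odd, then yields the semi-basis. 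Part (5) mirrors the proof of Lemma \ref{lem:100}(3): put $M_0 := \Z \cd 1_C$, so that $e : \Z \iso M_0$ in $\cat{C}_{\mrm{str}}(\Z)$, and assemble the remaining basis elements into contractible rank-two DG submodules, namely $\Z \cd \bar{x}_1 \oplus \Z \cd \bar{y}$ and $\Z \cd \bar{x}_{i + 1} \oplus \Z \cd \bar{x}_i \cd \bar{y}$ for $i \geq 1$ (using $\d(\bar{x}_1) = \bar{y}$, $\d(\bar{x}_{i + 1}) = \bar{x}_i \cd \bar{y}$, and $\d(\bar{x}_i \cd \bar{y}) = 0$); then $C = M_0 \oplus (\text{contractible})$ in $\cat{C}_{\mrm{str}}(\Z)$, so $e$ is a homotopy equivalence. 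Part (6) follows from (2) and (3): the homomorphism $p : \til{C} \to \Z$ annihilates $\til{R}$, hence descends to $C$, and uniqueness holds because $C$ is generated as a ring by $\bar{y}$ and the $\bar{x}_i$.

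The one step that I expect to require genuine care is the $\Z$-linear-independence claim in part (4) — that $\til{R}$ does not collapse $C$ beyond what the rewriting already predicts. The cleanest route is the comparison with the divided power algebra, whose $\Z$-freeness is standard (e.g.\ \cite{Ri} or \cite{SP}) but should be invoked precisely. Everything else reduces to the two cited lemmas on semi-free CDG rings or to a short Leibniz-rule-and-Pascal-identity computation.
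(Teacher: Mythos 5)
Your proposal is correct and follows essentially the same route as the paper: parts (1), (2) and (6) via \cite[Lemmas 3.19, 3.20]{Ye2}, the Pascal-identity computation $\d_{\til{C}}(r_{i,j}) = (r_{i-1,j} + r_{i,j-1}) \cd y$ for (3) (the paper streamlines your boundary cases by setting $x_0 := 1$), and the same decomposition $C = M_0 \oplus \bigoplus_{i \geq 1} \bigl( \Z \cd \bar{x}_i \oplus \Z \cd \bar{x}_{i-1} \cd \bar{y} \bigr)$ into contractible pieces for (5). The one place you go beyond the paper is part (4), where the paper simply asserts the $\Z$-freeness of the monomials $\bar{x}_i \cd \bar{y}^j$ while you justify it by identifying the subring generated by the $\bar{x}_i$ with the free divided power algebra on one degree-$n$ generator — a legitimate filling-in of a detail the paper leaves implicit, not a different argument.
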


\begin{proof}
(1) According to \cite[Lemma 3.20]{Ye2} the differential $\d_{\til{C}}$ exists
and it unique. Clearly $\til{C}$ is a semi-free CDG $\Z$-ring, with semi-basis
$X$.

\medskip \noindent
(2) By \cite[Lemma 3.19]{Ye2}.

\medskip \noindent
(3) Since
$\opn{deg}(x_i \cd x_j) = n \cd (i + j) = \opn{deg}(x_{i + j})$,
the ideal $\til{R}$ is graded. To prove that
$\d_{\til{C}}(\til{R}) \sub \til{R}$ it
suffices to prove that $\d_{\til{C}}(r_{i, j}) \in \til{R}$ for all
$i, j \geq 1$.
This is a calculation. Introducing $x_0 := 1$, we have
$\d_{\til{C}}(x_i) = x_{i  -1} \cd y$ for all $i \geq 1$.
Using the binomial identity
$\binom{i \mmsp + \mmsp j - 1}{j} + \binom{i \mmsp + \mmsp j - 1}{j - 1}
= \binom{i \mmsp + \mmsp j}{j}$
and the fact that $n$ is even, we obtain
\[ \begin{aligned}
&
\d_{\til{C}}(r_{i, j}) = \d_{\til{C}}(x_i) \cd x_j + (-1)^{n \cd i} \cd
x_i \cd \d_{\til{C}}(x_j) - {\textstyle \binom{i + j}{j}} \cd
\d_{\til{C}}(x_{i + j})
\\ & \quad
= x_{i - 1} \cd y \cd x_j + x_i \cd x_{j - 1} \cd y
- {\textstyle \binom{i + j}{j}} \cd x_{i + j - 1} \cd y
\\ & \quad
= \bigl( x_{i - 1} \cd x_j + x_i \cd x_{j - 1}
- {\textstyle \binom{i + j}{j}} \cd x_{i + j - 1} \bigr) \cd y
\\ & \quad
= \Bigl(
\bigl( r_{i - 1, j} + {\textstyle \binom{i \pl j - 1}{j}}
\cd x_{i \pl j - 1} \bigr)
+ \bigl( r_{i, j - 1} + {\textstyle \binom{i \pl j - 1}{j - 1}}
\cd x_{i \pl j - 1} \bigr)
- {\textstyle \binom{i + j}{j}} \cd x_{i + j - 1} \Bigr) \cd y
\\ & \quad
= (r_{i - 1, j} + r_{i, j - 1}) \cd y \in \til{R} .
\end{aligned} \]
Since $p(x_i) = 0$ for all $i \geq 1$ it follows that
$p(r_{i, j}) = 0$ for all $i, j \geq 1$, and thus $p(\til{R}) = 0$.

\medskip \noindent
(4) Since $\bar{y}$ has odd degree, we have $\bar{y}^2 = 0$.
Because $r_{i, j} \in \til{R}$, there is equality
$\bar{x}_i \cd \bar{x}_j - \binom{i + j}{j} \cd \bar{x}_{i + j} = 0$
in $C$. Therefore each $\Z \cd \bar{x}_i \cd \bar{y}^j$
is a free $\Z$-module of rank $1$, and
\[  C^{\natural} = \bigoplus_{1 \leq i, \mmsp 0 \leq j \leq 1}
\Z \cd \bar{x}_i \cd \bar{y}^j \]
as graded $\Z$-modules.
We see that $C^{\natural}$ is a bounded above graded free $\Z$-module, and
therefore $C$ is a semi-free DG $\Z$-module.

\medskip \noindent
(5) The differential $\d_C$ satisfies
$\d_C(\bar{x}_i \cd \bar{y}) = \bar{x}_{i - 1} \cd \bar{y} \cd \bar{y} = 0$
and
$\d_C(\bar{x}_i) = \bar{x}_{i - 1} \cd \bar{y}$.
Define $M_0 := \Z \cd 1_C$, and for $i \geq 1$ define
\[ M_i := \bigl( \Z \cd \bar{x}_i \bigr) \oplus
\bigl( \Z \cd \bar{x}_{i - 1} \cd \bar{y} \bigr) . \]
Then every $M_i$ is a sub DG $\Z$-module of $C$, and
$C = \bigoplus_{i \geq 0} M_i$ in $\cat{C}_{\mrm{str}}(\Z)$.
For every $i \geq 1$ the DG $\Z$-module $M_i$ is contractible, so the inclusion
$M_0 \to C$ is a homotopy equivalence in $\cat{C}_{\mrm{str}}(\Z)$.
And $e : \Z \to M_0$ is an isomorphism in
$\cat{C}_{\mrm{str}}(\Z)$.

\medskip \noindent
(6) This follows immediately from items (2)-(4).
\end{proof}

Fix a CDG ring $A$. Let $\{C_s \}_{s \in S}$ be a collection of objects of
$\cat{CDGRng} \ov A$, indexed by a set $S$. The coproduct of this collection in
$\cat{CDGRng} \ov A$ is denoted by $C_S$.
This coproduct can be realized as follows.
Let us denote by $\opn{Fin}(S)$ the set of finite subsets
$S' \sub S$, which is a directed set by inclusion.
Choose an ordering on the set $S$. Then every $S' \in \opn{Fin}(S)$ is
ordered, and every inclusion $S' \sub S''$ is order preserving.
Take some $S' \in \opn{Fin}(S)$, and write it as
$S' = \{ s_1, \ldots, s_m \}$ with $s_i < s_{i + 1}$.
Then the tensor product
$C_{S'} := C_{s_1} \ot_A \cdots \ot_A C_{s_m}$
exists in $\cat{CDGRng} \ov A$, and for an inclusion $S' \sub S''$ there is an
obvious homomorphism $C_{S'} \to C_{S''}$ in $\cat{CDGRng} \ov A$.
We obtain a direct system
$\{ C_{S'} \}_{S' \in \opn{Fin}(S)}$ in $\cat{CDGRng} \ov A$,
and $C_S \cong \lim\limits_{S' \to} C_{S'}$, the direct limit of this system.
For this reason we shall denote the coproduct $C_S$ by
$\bigotimes_{s \in S} C_s$.

Let $M$ be a DG $A$-module. Since $\d_A(A^0) = 0$, the differential $\d_M$ is
$A^0$-linear. For every integer $i$ we have the $A^0$-module
$\opn{Z}^i(M)$ of degree $i$ cocycles, and  the $A^0$-module $\opn{B}^i(M)$ of
degree $i$ coboundaries.
It is convenient to introduce the ad hoc object
$\opn{W}^i(M) := M^i / \opn{Z}^i(M)$.
In this way we have $A^0$-linear functors
\[ \opn{B}^i, \opn{Z}^i, \opn{Id}^i, \opn{W}^i, \opn{H}^i :
\cat{C}_{\mrm{str}}(A) \to \cat{M}(A^0) , \]
where $\cat{M}(A^0)$ is the category of $A^0$-modules.
There are the following morphisms of functors:
\[ \begin{tikzcd} [column sep = 6ex, row sep = 4ex]
\opn{Id}^{i - 1}
\ar[r, two heads, "{\opn{d}^{i - 1}}"]
&
\opn{B}^i
\ar[r, tail]
&
\opn{Z}^{i}
\ar[r, two heads]
\ar[d, tail]
&
\opn{H}^i
\\
&
&
\opn{Id}^{i}
\ar[r, two heads]
&
\opn{W}^{i}
\end{tikzcd} \]
And for $M \in \cat{C}_{\mrm{str}}(A)$ there are functorial short exact
sequences in $\cat{M}(A^0)$~:
\begin{equation} \label{eqn:115}
\begin{split}
0 \to \opn{Z}^{i - 1}(M) \to & \mmsp M^{i - 1} \xar{\mmsp \d^{i - 1}_M \mmsp}
\opn{B}^{i}(M) \to 0  \, ,
\\
0 \to \opn{B}^i(M) \to & \mmsp \opn{Z}^i(M) \to \opn{H}^i(M) \to 0 \, ,
\\
0 \to \opn{Z}^i(M) \to & \mmsp M^i \to \opn{W}^{i}(M) \to 0 \, .
\end{split}
\end{equation}

\medskip
\begin{proof}[Proof of Theorem \tup{\ref{thm:100}}]
The proof is broken up into a few steps.

\medskip \noindent
Step 1. For each integer $n \leq -1$ choose a collection
$\{ b_s \}_{s \in S^n}$ of elements of $B^n$, indexed by a set $S^n$,
whose images in $\opn{W}^n(B) = B^n / \opn{Z}^n(B)$ generate it as an
$A^0$-module. Define the graded set $S := \coprod_{n \leq -1} S^n$.

\medskip \noindent
Step 2.
Take an odd integer $n \leq -1$.

For an element $s \in S^n$ let $C_s$ be a copy
of the CDG ring $C$ from Lemma \ref{lem:100}, with
generators $x \in C_s^n$ and $y \in C_s^{n + 1}$.
According to \cite[Lemma 3.19]{Ye2} there exists a unique CDG $\Z$-ring
homomorphism $g_s : C_s \to B$ such that
$g_s(x) = b_s$ and $g_s(y) = \d_B(b_s)$.
There are also unique CDG ring homomorphisms
$e_s : \Z \to C_s$ and $p_s : C_s \to \Z$,
such that $p_s(x) = 0$, $p_s(y) = 0$, and $p_s \circ e_s = \opn{id}_{\Z}$.
For the existence and uniqueness of $p_s$ we rely on \cite[Lemma 3.19]{Ye2}.
By Lemma \ref{lem:100}(3) the inclusion $e_s : \Z \to C_s$ is a homotopy
equivalence in $\cat{C}_{\mrm{str}}(\Z)$.

Define the CDG ring
$C_{n} := \bigotimes_{s \in S^n} C_s$.
By the functoriality of the coproduct,
we get induced CDG ring homomorphisms
$g_n : C_n \to B$, $e_n : \Z \to C_n$ and $p_n : C_n \to \Z$,
such that $p_n \circ e_n = \opn{id}_{\Z}$.
For every $S' \in \opn{Fin}(S^n)$ let
$C_{S'} := \bigotimes_{s \in S'} C_s$; so
the inclusion $e_{S'} : \Z \to C_{Z'}$ is a homotopy
equivalence in $\cat{C}_{\mrm{str}}(\Z)$.
Because $C_n \cong \lim\limits_{S' \to} C_{S'}$,
it follows that the inclusion $e_n : \Z \to C_n$ is a homotopy equivalence in
$\cat{C}_{\mrm{str}}(\Z)$.

\medskip \noindent
Step 3. Choose a PD structure $\{ \ga^k \}_{k \geq 0}$ on the CDG ring $B$.
Take an even integer $n \leq -2$.

For an element $s \in S^n$ let $\til{C}_s$ be a copy of the CDG ring $\til{C}$
from Lemma \ref{lem:101}.
It is a semi-free CDG $\Z$-ring with semi-basis
$X = \{ x_i \}_{i \geq 1} \cup \{ y \}$.
According to \cite[Lemma 3.19]{Ye2} and condition (PD2) there exists a unique
CDG $\Z$-ring
homomorphism $\til{g}_s : \til{C}_s \to B$, such that
$\til{g}_s(x_i) = \ga^i(b_s)$ and $\til{g}_s(y) = \d_B(b_s)$.
There are also unique CDG ring homomorphisms
$\til{e}_s : \Z \to \til{C}_s$ and $\til{p}_s : \til{C}_s \to \Z$,
such that $\til{p}_s(x) = 0$, $\til{p}_s(y) = 0$, and
$\til{p}_s \circ \til{e}_s = \opn{id}_{\Z}$.

Let $r_{i, j}$ be the relations from Lemma \ref{lem:101}(3), and let
$\til{R} \sub \til{C}_s$ be the ideal generated by the collection
$\{ r_{i, j} \}_{i, j \geq 1}$.
Define the CDG ring $C_s := \til{C}_s / \til{R}$.
By condition (PD1), for every $i, j \geq 1$ we have
\[ \til{g}_s(r_{i, j}) =
\ga^i(b_s) \cd \ga^j(b_s) - {\textstyle \binom{i + j}{j}} \cd
\ga^{i + j}(b_s) = 0 \]
in $B$. Therefore $\til{g}_s(\til{R}) = 0$, and there is an induced CDG ring
homomorphism $g_s : C_s \to B$.
There are also induced CDG ring homomorphisms
$e_s : \Z \to C_s$ and $p_s : C_s \to \Z$,
such that $p_s \circ e_s = \opn{id}_{\Z}$.
By Lemma \ref{lem:101}(5) the inclusion $e_s : \Z \to C_s$ is a homotopy
equivalence in $\cat{C}_{\mrm{str}}(\Z)$.

Define the CDG ring
$C_{n} := \bigotimes_{s \in S^n} C_s$.
We get induced CDG ring homomorphisms
$g_n : C_n \to B$, $e_n : \Z \to C_n$ and $p_n : C_n \to \Z$,
such that $p_n \circ e_n = \opn{id}_{\Z}$.
The inclusion $e_n : \Z \to C_n$ is a homotopy equivalence in
$\cat{C}_{\mrm{str}}(\Z)$.

\medskip \noindent
Step 4.
Define the CDG ring
$C := \bigotimes_{n \leq -1} C_n$.
There are induced CDG ring homomorphisms
$g_C : C \to B$, $e_C : \Z \to C$ and $p_C : C \to \Z$,
such that $p_C \circ e_C = \opn{id}_{\Z}$.
The inclusion $e_C : \Z \to C$ is a homotopy equivalence in
$\cat{C}_{\mrm{str}}(\Z)$.
Here is the commutative diagram in $\cat{CDGRng}$ describing this step:
\begin{equation} \label{eqn:119}
\begin{tikzcd} [column sep = 16ex, row sep = 6ex]
\Z
&
C
\ar[l, two heads, "{p_C, \mmsp \mrm{sqi}}"']
\ar[d, "{g_C}"]
\\
\Z
\ar[u, two heads, tail, "{\opn{id}_{\Z}, \mmsp \mrm{isom}}"]
\ar[ur, tail, "{e_C, \mmsp \mrm{qi}}"]
\ar[r]
&
B
\end{tikzcd}
\end{equation}

\medskip \noindent
Step 5. Define the CDG ring
$\til{B} := A \ot_{\Z} C$.
There are induced CDG ring homomorphisms
$\til{f} := f \ot g_C : \til{B} \to B$,
$e := \opn{id}_A \ot \mmsp e_C :  A \to \til{B}$, and
$p := \opn{id}_A \ot \mmsp p_C :  \til{B} \to B$,
such that $p \circ e = \opn{id}_{A}$.
See diagram (\ref{eqn:100}).

Since $e_C : \Z \to C$ is a homotopy equivalence in
$\cat{C}_{\mrm{str}}(\Z)$, it follows that $e : A \to \til{B}$ is a homotopy
equivalence in $\cat{C}_{\mrm{str}}(A)$. Hence $e : A \to \til{B}$
it is a quasi-isomorphism in $\cat{CDGRng}$.
A diagram chase, in diagram (\ref{eqn:100}), shows that
$p : \til{B} \to A$ and $\til{f} : \til{B} \to B$ are
quasi-isomorphisms too.

\medskip \noindent
Step 6. It remains to prove that $\til{f} : \til{B} \to B$ is surjective.

By construction (in step 1), for every $i \leq -1$ the $A^0$-module
homomorphism
$\phi_i : \lb A^0 \ot_{\Z} C^i \to \opn{W}^i(B)$
induced by the inclusion $A^0 \ot_{\Z} C^i \to B^i$
is surjective. Looking at the commutative diagram
\[ \begin{tikzcd} [column sep = 7ex, row sep = 6ex]
A^0 \ot_{\Z} C^i
\ar[r]
\arrow[rrr, two heads, bend left = 20, "{\phi_i}"]
&
\til{B}^i = (A \ot_{\Z} C)^i
\arrow[r]
&
\opn{W}^i(\til{B})
\arrow[r, "{\opn{W}^i(\til{f})}"']
&
\opn{W}^i(B)
\end{tikzcd} \]
we see that $\opn{W}^i(\til{f}) : \opn{W}^i(\til{B}) \to \opn{W}^i(B)$
is surjective.
The homomorphism $\opn{W}^0(\til{f}) : \opn{W}^0(\til{B}) \to \opn{W}^0(B)$
is surjective because $\opn{W}^0(B) = 0$.
Thus $\opn{W}^0(\til{f})$ is surjective for all $i \leq 0$.

For every $i \leq 0$ the
surjection $\d_B^{i - 1} : B^{i - 1} \to \opn{B}^{i}(B)$
induces a surjection
$\opn{W}^{i - 1}(B) \to \opn{B}^{i}(B)$.
The commutative diagram
\[ \begin{tikzcd} [column sep = 8ex, row sep = 6ex]
\opn{W}^{i - 1}(\til{B})
\ar[r, two heads]
\ar[d, two heads, "{\opn{W}^{i - 1}(\til{f})}"']
&
\opn{B}^{i}(\til{B})
\ar[d, "{\opn{B}^{i}(\til{f})}"]
\\
\opn{W}^{i - 1}(B)
\ar[r, two heads]
&
\opn{B}^{i}(B)
\end{tikzcd} \]
in $\cat{M}(A^0)$, with the surjectivity of $\opn{W}^{i - 1}(\til{f})$, show
that $\opn{B}^i(\til{f}) : \opn{B}^i(\til{A}) \to \opn{B}^i(B)$ is surjective.

For every $i \leq 0$ we have this commutative diagram
\[ \begin{tikzcd} [column sep = 6ex, row sep = 6ex]
0
\ar[r]
&
\opn{B}^{i}(\til{B})
\ar[r]
\ar[d, two heads, "{\opn{B}^{i}(\til{f})}"']
&
\opn{Z}^{i}(\til{B})
\ar[r]
\ar[d, "{\opn{Z}^{i}(\til{f})}"']
&
\opn{H}^{i}(\til{B})
\ar[r]
\ar[d, two heads, tail, "{\opn{H}^{i}(\til{f})}"']
&
0
\\
0
\ar[r]
&
\opn{B}^{i}(B)
\ar[r]
&
\opn{Z}^{i}(B)
\ar[r]
&
\opn{H}^{i}(B)
\ar[r]
&
0
\end{tikzcd} \]
in $\cat{M}(A^0)$ with exact rows, in which $\opn{B}^{i}(\til{f})$ is
surjective (as we proved above),  and $\opn{H}^{i}(\til{f})$ is bijective
(because $\til{f}$ is a quasi-isomorphism). Therefore
$\opn{Z}^i(\til{f}) : \opn{Z}^i(\til{B}) \to \opn{Z}^i(B)$ is surjective.

Lastly, for every $i \leq 0$ we have this commutative diagram
\[ \begin{tikzcd} [column sep = 6ex, row sep = 6ex]
0
\ar[r]
&
\opn{Z}^{i}(\til{B})
\ar[r]
\ar[d, two heads, "{\opn{Z}^{i}(\til{f})}"']
&
\til{B}^i
\ar[r]
\ar[d, "{\til{f}^i}"']
&
\opn{W}^{i}(\til{B})
\ar[r]
\ar[d, two heads, tail, "{\opn{W}^{i}(\til{f})}"']
&
0
\\
0
\ar[r]
&
\opn{Z}^{i}(B)
\ar[r]
&
B^{i}
\ar[r]
&
\opn{W}^{i}(B)
\ar[r]
&
0
\end{tikzcd} \]
in $\cat{M}(A^0)$ with exact rows, in which $\opn{Z}^{i}(\til{f})$ and
$\opn{W}^{i}(\til{f})$ are surjective.
Therefore
$\til{f}^i : \til{A}^i \to B^i$ is surjective.
Summing on all $i \leq 0$ we conclude that
$\til{f} : \til{A} \to B$ is surjective.
\end{proof}

Here are two remarks to end the paper.

\begin{rem} \label{rem:120}
The proof of Theorem \ref{thm:100} gives a little bit more: the CDG $A$-ring
$\til{B} = A \ot_{\Z} C$ constructed there is {\em semi-free as a DG
$A$-module}.

But $\til{B}$ is {\em not semi-free as a CDG $A$-ring}.
To see why, consider the CDG ring $C$ in Lemma \ref{lem:101}, for an even
integer $n \leq -2$. The element $\bar{x}_1 \in C$ satisfies
$(\bar{x}_1)^2 = 2 \cd \bar{x}_2$, etc. Thus the collection
$\{ \bar{x}_i \}_{i \geq 1}$ of elements of $C$ is {\em linearly independent}
over $\Z$, but it is {\em not algebraically independent}.

The construction of the CDG ring $\til{B}$ is a generalization of the {\em Tate
resolution}, see
\cite[Lemmas
tag = \href{https://stacks.math.columbia.edu/tag/09PN}{\texttt 09PN}
and tag = \href{https://stacks.math.columbia.edu/tag/09PP}{\texttt 09PP}]{SP}.
\end{rem}

\begin{rem} \label{rem:140}
Theorem \ref{thm:100} will be used in our upcoming papers \cite{Ye4} and
\cite{Ye5}. The overall project is outlined in our lecture notes \cite{Ye6}.
However, we should warn the reader that at present {\em the proof of Theorem
\tup{5.1} in \cite{Ye6} is broken}, and hence the subsequent statements in
\cite{Ye6} should be taken with as tentative only.
\end{rem}


\end{document}